\def\sizesmallfig{0.4}
\def\sizewave{2}
\def\Sb{\mathbf{S}}
\def\Pb{\mathbf{P}}
\def\A{\mathbf{A}}
\def\B{\mathbf{B}}
\def\q{\mathbf{q}}
\def\U{\mathbf{U}}
\def\Z{\mathbf{Z}}
\def\D{\mathbf{D}}
\def\X{\mathbf{X}}
\def\x{\mathbf{x}}
\def\y{\mathbf{y}}
\def\z{\mathbf{z}}
\renewcommand\u{\mathbf{u}}
\def\M{\mathbf{M}}
\def\Q{\mathbf{Q}}
\def\Lop{\mathcal{L}}
\newcommand{\Sigmab}{\boldsymbol{\Sigma}}
\newcommand{\R}{\mathbb{R}}
\newcommand{\C}{\mathbf{C}}
\newcommand{\N}{\mathbb{N}}
\theoremstyle{plain}
\newtheorem{theorem}{Theorem}[section]
\newtheorem{lemma}[theorem]{Lemma}
\theoremstyle{definition}
\newtheorem{definition}[theorem]{Definition}
\newtheorem{assumption}[theorem]{Assumption}
\newtheorem{remark}{Remark}
\newcommand{\qed}{\nobreak \ifvmode \relax \else
      \ifdim\lastskip<1.5em \hskip-\lastskip
      \hskip1.5em plus0em minus0.5em \fi \nobreak
      \vrule height0.75em width0.5em depth0.25em\fi}
\begin{document}
\author{Victor Magron$^{1}$ \and Christophe Prieur$^{2}$}
\date{\today}
\title{Optimal Control of PDEs using Occupation Measures and SDP Relaxations}

\footnotetext[1]{Univ.~Grenoble Alpes, CNRS, VERIMAG, 700 av Centrale, 38401 Saint-Martin d'Hères, France (\url{victor.magron@univ-grenoble-alpes.fr})}
\footnotetext[2]{Univ.~Grenoble Alpes, CNRS, GIPSA-lab, 38401 Saint-Martin d'Hères, France (\url{christophe.prieur@gipsa-lab.fr})}
\maketitle


\begin{abstract}
This paper addresses the problem of solving a class of nonlinear optimal control problems (OCP) with infinite-dimensional linear state constraints involving Riesz-spectral operators. 
Each instance within this class has time/control dependent polynomial Lagrangian cost and control contraints described by polynomials. 
We first perform a state-mode discretization of the Riesz-spectral operator. 
Then, we approximate the resulting finite-dimensional OCPs by using a previously known hierarchy of semidefinite relaxations. Under certain compactness assumptions, we provide a converging hierarchy of semidefinite programming relaxations whose optimal values yield lower bounds for the initial OCP. 
We illustrate our method by two numerical examples,  involving a diffusion partial differential equation and a wave equation. We also report on the related experiments. 
\end{abstract}
\paragraph{Keywords:} 
Riesz-spectral operators, partial differential equations,  moment matrices, polynomial optimization, semidefinite relaxations.
\section{Introduction}
\label{sec:intro}
This paper focuses on solving a general class of nonlinear optimal control problems (OCP) with infinite-dimensional linear state constraints. The states of such systems are assumed to be solution of the following abstract Cauchy problem on a Hilbert space $\Z$:
\begin{align}
\label{eq:cauchy}
\dot{\z} = A \, \z(t) + B \, u(t) \,, \quad \z(0) = \z^0 \,,
\end{align}
where  $A$ is the infinitesimal generator of a strongly continuous semigroup on $\Z$, the control space $\U$ being a Hilbert space  and $B$ being an operator mapping $\U$ into $\Z$.

The main underlying motivation is the study of several parabolic/hyperbolic partial differential equations (PDEs), which can be formulated into such infinite-dimensional linear systems (see e.g.~\cite{Curtain95,MorrisHB}). This includes diffusion equations to model heat flow in rods, beam equations for vibration analysis, wave equations to describe deflection of vibrating strings~\cite{meurer2012control}. Another motivation is the control or stability analysis of delay systems. Such systems involve retarded differential equations and can also be reformulated as abstract differential equations involving Riesz-spectral operators. For more details, we refer the interested reader to~\cite[\S~2.4]{Curtain95}.

There exists already a literature on optimal control of PDEs. Consider in particular the Hilbert Uniqueness Method (HUM) as introduced in~\cite{lions1988exact} which gives the optimal control by minimizing the $L^2$-norm of the control. A numerical method dedicated to the computation of the optimal HUM control for the wave equation is given in~\cite{lebeau2010experimental}. Numerically computing the optimal asks for special care since the high frequencies may interfere with the mesh (see~\cite{zuazua2005propagation} for a general result of convergence in the parabolic case). With respect to these prior works, the aim of this paper is to present a new method for the computation of optimal controls for a large class of linear partial equations, in presence of state and control constraints, and using moments and measure theory.

Our general methodology is inspired from prior research work in the context of dynamical polynomial systems. In~\cite{Las08}, the authors proposed a framework to handle certain class of nonlinear OCPs while relying on infinite-dimensional linear programming (LP), where variables are occupation measures. They derived a hierarchy of semidefinite programs (SDP), whose optimal values form a nondecreasing sequence of lower bounds for the LP, potentially converging to the optimal value of the OCP when all data are polynomial. This can be seen as leverage on the technique initially introduced in~\cite{Lasserre01moments} in the context of static polynomial optimization. Recent extensions allowed to address various problems arising in control of polynomial systems, including the characterization of regions of attraction~\cite{HK14roa}, maximum controlled invariants~\cite{KHJ13mci} as well as reachability analysis of discrete-time polynomial systems~\cite{MGHT17reach}.

The approach developed in~\cite{Valmorbida:IEEETAC:2016} is completely different with respect to the present one. Indeed for a class of polynomials PDEs, a design method is suggested in~\cite{Valmorbida:IEEETAC:2016} to compute a polynomial Lyapunov function and an associate domain of attraction (see also~\cite{ahmadi2017safety} in a safety certification context). To do so, techniques relying on sums of squares are applied and a fully different hierarchy of SDP relaxations is derived. In our paper, the control problem is different, since a stability analysis is not considered but rather an optimal control problem. Moreover, in our case occupation measures are used on linear Riesz-spectral operators.

\paragraph{Contribution}

In this article, we consider the class of nonlinear OCPs with (time-control dependent) polynomial Lagrangian cost, infinite-dimensional linear state constraints and semialgebraic control constraints. By contrast with prior work, our main results can be summarized as follows:
\begin{itemize}
\item we rely on a discretization of the Riesz-spectral operator involved in the state constraints to provide a sequence of nonlinear OCPs with finitely many linear state-mode  constraints. Each state-mode OCP can then be reformulated as an infinite-dimensional LP by using occupation measures, following the approach described in~\cite{Las08}. At the end, this leads to a sequence of infinite-dimensional LPs, indexed by the number of modes.
\item we approximate each LP with a hierarchy of finite-dimensional SDP relaxations whose optimal values form a converging sequence of lower bounds for the LP and thus of the corresponding state-mode OCP as well as of the initial OCP. This methodology is then applied to study nonlinear control problems involving diffusion, beam or wave partial differential equations. 
\end{itemize}

Our paper is organized as follows. In Section~\ref{sec:background}, we recall preliminary background about Riesz-spectral operators and state our initial OCP. Section~\ref{sec:infLP} is devoted to the discretization of this OCP into a sequence of state-mode OCPs, which are in turn reformulated into infinite-linear LPs over occupation measures. Next, in Section~\ref{sec:sdp}, we show how to build a hierarchy of converging finite-dimensional SDP relaxations for each infinite-dimensional LP. Eventually, we illustrate the method with several numerical experiments in Section~\ref{sec:bench}.
\section{Riesz-Spectral Operators and OCPs}
\label{sec:background} 
\subsection{Riesz-Spectral Operators}
\label{sec:riesz}
Let  $\Z$ be a Hilbert space, equipped with a norm operator denoted by $\|\cdot\|$. Let $L(\Z)$ be the set of bounded linear operators from $\Z$ to $\Z$.
%
As stated in~\cite[Definition~2.1.2]{Curtain95}, a \textit{strongly continuous semigroup} ($\mathcal{C}^0$-\textit{semigroup}) is an operator-valued function $T(t)$ from $\R^+$ to $L(\Z)$ that satisfies the following properties:
\begin{align}
T(t+s) & = T(t) \, T(s) \,, \quad \forall t,s \geq 0 \,, \\
T(0) & = I \,, \quad (\text{identity on~} \Z)  \,, \\
\| T(t) \, \z^0 - \z^0 \| & \to 0 \,, \quad \text{as } t \to 0^+ \,, \quad \forall \z^0 \in \Z \,.
\end{align}
%
The \textit{infinitesimal generator} A of a $\mathcal{C}^0$-\textit{semigroup} on a Hilbert space $\Z$ is defined by:
\begin{align*}
A z = \lim_{t \to 0^+}  (T(t) - I) \z \,,
\end{align*}
whenever the limit exists (see~\cite[Definition~2.1.8]{Curtain95}). The domain $\D(A)$ of $A$ is the set of elements in $\Z$ for which the limit exists.
%

The notion of \textit{Riesz-spectral operator} allows to represent several interesting classes of linear partial differential equations, which can be either parabolic or hyperbolic. 
This representation relies on the concept of \textit{Riesz basis}. 
\begin{definition}{(\cite[Definition~2.3.1]{Curtain95})}
A sequence of vectors $(\Phi_k)_{k\geq 1}$ in  $\Z$ is a \textit{Riesz basis} for $\Z$ if the following two conditions hold:
\begin{enumerate}
\item $\overline{\mathop{\text{span}}\limits_{k\geq 1}} \{\Phi_k \} = \Z$.
\item There exists $m, M > 0$ such that for all positive $N \in \N$ and arbitrary scalars $\alpha_1,\dots,\alpha_N$ one has:
\[
m \sum_{k=1}^N |\alpha_k|^2 \leq 
\| \sum_{k=1}^N \alpha_k \Phi_k \|^2 \leq 
M \sum_{k=1}^N |\alpha_k|^2 \,.
\]
\end{enumerate}
\end{definition}
Let $A$ be a linear closed operator on $\Z$ with simple eigenvalues $(\lambda_k)_{k \geq 1}$  and corresponding eigenvectors $(\Phi_k)_{k \geq 1}$ forming a Riesz basis in $\Z$. If the closure of $(\lambda_k)_{k \geq 1}$ is disconnected, then $A$ is called a \textit{Riesz-spectral operator}.

In the sequel, we assume that $A$ is such a  Riesz-spectral operator.  We note $(\Psi_k)_{k \geq 1}$ the eigenvectors of $A^\star$ satisfying $\langle \Phi_k, \Psi_j \rangle = \delta_{k j}$. Then by~\cite[Theorem~2.3.5]{Curtain95}, $A$ has the following representation, for all $\z \in \D(A)$:
\begin{align}
\label{eq:representation}
A \, \z = \sum_{k=1}^\infty \lambda_k \langle \z, \Psi_k \rangle \Phi_k \,,
\end{align}
\subsection{Statement of Optimal Control Problem}
\label{sec:pb}
For a given Hilbert space $\Z$, real number $T > 0$ and a positive integer $m$, we consider the following infinite-dimensional {\em state linear system}:
\begin{align}
\label{eq:ocp}
\dot{\z} = A \, \z(t) + B \, u(t) \,, \quad t \in [0, T] \,,
\end{align}
where 
\begin{itemize}
\item $A$ is a Riesz-spectral operator, with representation as in~\eqref{eq:representation} and is the infinitesimal generator of a strongly continuous semigroup $T(t)$ on the Hilbert space $\Z$.
\item $B$ is a bounded linear operator from $\U$ to $\Z$, with $\U$ being a compact subset of $\R^m$. The control function $u : [0, T] \to \U$ is  bounded measurable.
\end{itemize}
Let $\Z^T$ be a subset of $\D(A)$.
For a given $T > 0$, and $\z^0 \in \D(A)$, a control $u$ is said admissible on $[0, T]$ if there exists a solution $\z(\cdot)$ of 
\eqref{eq:ocp}, such that $\z(0) = \z^0$, $u$ is well defined on $[0, T]$ and fulfills:
\begin{align}
\label{eq:state}
(\z(t),u(t)) \in \Z \times \U \quad \text{a.e.~on~}  [0, T] \,,
\end{align}
\begin{align}
\label{eq:bounds}
\z(T) \in \Z^T \,.
\end{align}
We denote by $\mathcal{U}^T$ the set of admissible controls on $[0, T]$. For a given $\u \in \mathcal{U}^T$, the cost of the associated trajectory $\z(\cdot)$ is given by:
\begin{align}
\label{eq:cost}
J(0,T,\z^0, \u) = \int_0^T h(t,u(t)) d t \,,
\end{align}
with $h:[0, \infty] \times \U \to \R$ being a smooth function.

In the sequel, we consider the optimal control problem of computing a trajectory solution $\z$ of~\eqref{eq:ocp} starting from $\z^0$, fulfilling the state and control constraints~\eqref{eq:state}, the terminal constraint~\eqref{eq:bounds} and minimizing the cost~\eqref{eq:cost}.

For fixed final time $T$, we define:
\begin{align}
\label{eq:JT}
J(0, T, \z^0) := \inf_{\u \in \mathcal{U}^T} J(0,T,\z^0, \u) \,,
\end{align}
and for free final time, we define:
\begin{align}
\label{eq:Jfree}
J(0, \z^0) := \inf_{T > 0, \u \in \mathcal{U}^T} J(0,T,\z^0, \u) \,.
\end{align}
\section{Infinite-Dimensional LP and Occupation Measures}
\label{sec:infLP}
\subsection{State-Mode Discretization}
\label{sec:disc}
For a state linear system defined as in~\eqref{eq:ocp}, let us define the $N$-th state-mode $z_N := \langle \z, \Psi_N \rangle$, for all positive integers $N$. Similarly, we define $z_N^0 := \langle \z^0, \Psi_N \rangle$.
Let us note $B u = \sum_{i=1}^m B_i u_i$ 
with $B_i : \R \to \Z$ being the $i$-th projection of the operator $B$ in the canonical basis of $\R^m$. 
We define the linear operator $b_N(u) := \sum_{i=1}^m \langle B_i u_i, \Psi_N \rangle$. Next, let us note $f_N(t,z_N,u) := \lambda_N \, z_N + b_N ( u(t))$.
The state linear system defined as in~\eqref{eq:state} is then equivalent to the following infinite-dimensional state-mode linear system:
\begin{align}
\label{eq:ocpN}
\dot{z}_N = f_N(t,z_N,u) \,, \quad t \in [0, T] \,, \quad N \in \{1,2,\dots\} \,.
\end{align}
\begin{remark}
In the sequel, we assume without loss of generality that the eigenvalues $(\lambda_N)_{N \geq 1}$ involved in~\eqref{eq:ocpN} are real numbers. Indeed, if $(\lambda_N)_{N \geq 1}$ is a sequence of complex numbers, one can always rewrite~\eqref{eq:ocpN} as a real system of linear equations involving the real and imaginary parts of $z_N$ and $\lambda_N$, thus  yielding a system with twice the initial number of variables. 
\end{remark}

We note $\Z_N := \{\langle \z, \Psi_N \rangle : \z \in \Z \}$ and $\Z_N^T := \{\langle \z, \Psi_N \rangle : \z \in \Z^T \}$. Accordingly to~\cite{Las08}, we also use the notations $\Sigmab_N := [0, T] \times \Z_1 \times \dots \times \Z_N$ and $\Sb_N := \Sigmab_N \times U$.

Instead of considering OCP~\eqref{eq:JT}, we consider the optimal control problem of minimizing the cost~\eqref{eq:cost} over the set $\mathcal{U}_N^T$ of admissible controls on $[0, T]$ such that there exists a trajectory solution $(z_1,\dots,z_N)$ of~\eqref{eq:ocpN} starting from $ (z_1^0,\dots,z_N^0) = (z_1(0),\dots,z_N(0))$, and fulfilling the constraints:
\begin{align}
\label{eq:stateN}
(z_1(t),\dots,z_N(t), u(t)) \in \Z_1 \times \dots \times \Z_N \times \U \quad \text{a.e.~on~}  [0, T] \,,
\end{align}
\begin{align}
\label{eq:boundsN}
\z(T) \in \Z_1^T \times \dots \times \Z_N^T \,.
\end{align}
Thus, for fixed final time $T$, we define:
\begin{align}
\label{eq:JNT}
J_N(0, T, \z^0) := \inf_{\u \in \mathcal{U}_N^T} J(0,T,\z^0, \u) \,,
\end{align}
and for free final time, we define:
\begin{align}
\label{eq:JNfree}
J_N(0, \z^0) := \inf_{T > 0, \u \in \mathcal{U}_N^T} J(0,T,\z^0, \u) \,.
\end{align}

\begin{lemma}
\label{th:JNTleqJT}
Assume that the set $\mathcal{U}^T$ of admissible controls is non empty. Then the sequence $(J_N(0,T,\z^0))_{N>0}$ is monotone  nondecreasing and $\lim_{N \to \infty} J_N(0, T, \z^0) \leq J(0, T, \z^0)$.
\end{lemma}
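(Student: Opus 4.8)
The plan is to observe first that the cost $J(0,T,\z^0,\u)=\int_0^T h(t,u(t))\,dt$ in \eqref{eq:cost} depends only on the control $\u$ and not on the state trajectory. Consequently, at every discretization level the \emph{same} functional is being minimized, and only the feasible set of controls changes. The whole statement therefore reduces to comparing the admissible sets $\mathcal{U}_N^T$ (from \eqref{eq:JNT}) and $\mathcal{U}^T$ (from \eqref{eq:JT}) by set inclusion, together with the elementary fact that the infimum of a fixed functional over a smaller set is at least its infimum over a larger set.

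For the monotonicity, I would show that $\mathcal{U}_{N+1}^T\subseteq\mathcal{U}_N^T$. Given $\u\in\mathcal{U}_{N+1}^T$, the associated mode trajectory $(z_1,\dots,z_{N+1})$ satisfies the level-$(N+1)$ constraints \eqref{eq:stateN}--\eqref{eq:boundsN}. Since the mode dynamics \eqref{eq:ocpN} are decoupled, the equation $\dot z_k=\lambda_k z_k+b_k(u(t))$ involving only $z_k$ and $u$, the first $N$ components $(z_1,\dots,z_N)$ are exactly the level-$N$ trajectory driven by the same $\u$, and they satisfy the level-$N$ constraints, which are just the level-$(N+1)$ constraints with the last coordinate dropped. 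Hence $\u\in\mathcal{U}_N^T$, and taking infima over the nested sets yields $J_N(0,T,\z^0)\le J_{N+1}(0,T,\z^0)$.

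For the upper bound, I would prove the inclusion $\mathcal{U}^T\subseteq\mathcal{U}_N^T$ for every $N$. Let $\u\in\mathcal{U}^T$ with full solution $\z(\cdot)$ of \eqref{eq:ocp}. Setting $z_k(t):=\langle\z(t),\Psi_k\rangle$, differentiation together with the representation \eqref{eq:representation} and the biorthogonality $\langle\Phi_k,\Psi_j\rangle=\delta_{kj}$ gives $\dot z_k=\langle A\z+Bu,\Psi_k\rangle=\lambda_k z_k+b_k(u(t))$, so these projections solve \eqref{eq:ocpN} from $z_k^0=\langle\z^0,\Psi_k\rangle$; by uniqueness of the solution of the linear mode ODE for bounded measurable $\u$, they coincide with the level-$N$ trajectory. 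Moreover, since $\z(t)\in\Z$ a.e. and $\z(T)\in\Z^T$, the very definitions $\Z_k=\{\langle\z,\Psi_k\rangle:\z\in\Z\}$ and $\Z_k^T=\{\langle\z,\Psi_k\rangle:\z\in\Z^T\}$ force $z_k(t)\in\Z_k$ a.e. and $z_k(T)\in\Z_k^T$, i.e. \eqref{eq:stateN}--\eqref{eq:boundsN} hold. Thus $\u\in\mathcal{U}_N^T$, whence $J_N(0,T,\z^0)\le J(0,T,\z^0)$ for all $N$.

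Finally, the sequence $(J_N(0,T,\z^0))_{N>0}$ is nondecreasing and bounded above by $J(0,T,\z^0)$, which is finite since $\mathcal{U}^T\neq\emptyset$ and $h$ is continuous on the compact set $[0,T]\times\U$; it therefore converges, with $\lim_{N\to\infty}J_N(0,T,\z^0)\le J(0,T,\z^0)$. I expect the only genuine obstacle to be the middle step: rigorously identifying the modal projections $\langle\z(\cdot),\Psi_k\rangle$ of an admissible full trajectory with the unique solutions of the decoupled ODEs \eqref{eq:ocpN}. This is exactly where the Riesz-spectral structure is used, and where one must ensure that $\z(t)$ stays in $\D(A)$ so that \eqref{eq:representation} applies; everything else is a routine manipulation of infima over nested feasible sets.
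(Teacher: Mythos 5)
Your proposal is correct and follows essentially the same route as the paper: both reduce the claim to the nested inclusions $\mathcal{U}^T\subseteq\mathcal{U}_{N+1}^T\subseteq\mathcal{U}_N^T$ (the paper's opening line states the first inclusion backwards, but its argument, like yours, projects an admissible full trajectory onto its first $N$ modes), and then conclude by monotone bounded convergence. Your write-up is simply more explicit about why the modal projections solve the decoupled ODEs and why the cost, depending only on $\u$, makes the comparison of infima immediate.
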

\begin{proof}
Let us fix a positive integer $N$ and prove that $\mathcal{U}_N^T \subseteq \mathcal{U}^T$. 
Let us consider a control $\u \in \mathcal{U}^T$ together with a  solution $z(\cdot)$ of~\eqref{eq:ocp} starting from $\z^0$, fulfilling the state and control constraints~\eqref{eq:state}, the terminal constraint~\eqref{eq:bounds}. Thus, the same $\u$ together with the N first modes $z_1,\dots,z_N$ must satsify the state and control constraints~\eqref{eq:stateN}, as well as the terminal constraint~\eqref{eq:boundsN}. This proves that  $J_N(0, T, \z^0) \leq J(0, T, \z^0)$. Similarly, one has $\mathcal{U}_{N+1}^T \subseteq \mathcal{U}_N^T$ which yields $J_N(0, T, \z^0) \leq J_{N+1}(0, T, \z^0)$. The sequence $(J_N(0,T,\z^0))_{N>0}$ is monotone nondecreasing and bounded, which implies the desired convergence result.
\end{proof}
Note that OCP~\eqref{eq:JNT} is a particular case of OCP~(2.5) in~\cite{Las08} with the notations $x \leftarrow (z_1,\dots, z_N)$, $f \leftarrow (f_1, \dots, f_N)$. In our case, the Lagrangian $h$ involved in the cost does not depend on the state variables $z_1,\dots, z_N$ and the Hamiltonian $H$ is equal to 0.

In the sequel, we make the following assumption:
\begin{assumption}
\label{hyp:compact}
For all positive integers $N$, the set $\Z_N$ (resp.~$\Z_N^T$) is a closed real interval $[\underline{z_N}, \overline{z_N}]$ (resp.~$[\underline{z_N}^T, \overline{z_N}^T]$).
\end{assumption}
This assumption allows to use the results from~\cite{Las08} relying on compactness of the general (resp.~terminal) state constraints. In particular, one shows the equivalence between  OCP~\eqref{eq:JNT} and an infinite-dimensional LP over occupation measures, for all positive integer $N$.
\subsection{Occupation Measures}
\label{sec:meas}
Let us define the linear mapping $\Lop_N : \mathcal{C}^1(\Sb_N) \to \mathcal{C}(\Sigmab_N)$:
\begin{align}
\label{eq:LopN}
v \mapsto \Lop_N (g) :=\frac{\partial g}{\partial t}(t,z_1,\dots,z_N) + \sum_{k=1}^N f_k(t,z_k,u) \, \frac{\partial g}{\partial z_k} (t,z_1,\dots,z_N) \,,
\end{align}
Then, we define further the adjoint linear operator $\Lop_N^\star : \mathcal{C}(\Sigmab_N)^\star \to \mathcal{C}^1(\Sb_N)^\star$ with the adjoint relation:
\begin{align}
\label{eq:LopNstar}
\langle \Lop_N^\star (\mu), g \rangle := \langle \mu, \Lop_N (g) \rangle = \int_{\Sb_N} \Lop_N (g) \, d \mu(t,z_1,\dots,z_N,u) \,,
\end{align}
for each Borel measure $\mu \in \mathcal{M}(\Sb_N) = \mathcal{C}(\Sb_N)^\star$ and any test function $g \in \mathcal{C}^1(\Sigmab_N)$.
We note $\mathcal{T}$ (resp.~$\mathcal{B}_N$) the Borel $\sigma$-algebra associated with $[0, T]$ (resp.~$\R^N$).
Let $\mathbf{1}_\A (\cdot)$ stand for the indicator function of the set $\A$, namely $\mathbf{1}_\A(\z) = 1$ if $\z \in \A$ and $\mathbf{1}_\A(\z) = 0$ otherwise.
Let us now define the two following {\em occupation measures} $\mu_N$ and $\mu_N^T$, by
\begin{align}
\label{eq:occupationN}
\mu_N (\A \times \B \times \C) := & \int_{[0, T]\cap \A} \mathbf{1}_{\B \times \C}[z_1(t),\dots,z_N(t),u(t)] d t \,, \\ 
\mu_N^T (\D) := & \mathbf{1}_{\D}(z_1^T,\dots,z_N^T) \,,
\end{align}
%
%
for each $\D \in \mathcal{B}_N$ and for all rectangles $(\A \times \B \times \C)$ with $(\A,\B,\C) \in \mathcal{T} \times \mathcal{B}_N \times \mathcal{B}_m$.

The {\em average occupation measure} $\mu_N$ counts the average time spent by the $N$ first state-modes and control trajectories in subsets of $\Z_1 \times \dots \times \Z_N \times \U$.
The {\em terminal occupation measure} $\mu_N^T$ counts the average time spent by the $N$ first state-modes in subsets of $\Z_1^T \times \dots \times \Z_N^T$.

Let us note $\otimes$ the product of measures, $\delta_N^0$ the Dirac measure at point $(t,z_1,\dots,z_N) = (0, z_1^0,\dots,z_N^0)$ and $\delta^T_N$ the Dirac measure at point $(t,z_1,\dots,z_N) = (T, z_1^T,\dots,z_N^T)$

As in~\cite[$\S$~2.3]{Las08}, we obtain the following linear equation between the initial measure $\delta^0_N$, the terminal measure $\mu_N^T$ and the average occupation measure $\mu_N$:
\begin{align}
\label{eq:liouvilleN}
\delta_N^T \otimes \mu_N^T - \delta^0_N = \Lop_N^\star (\mu_N) \,,
\end{align}
\subsection{Infinite-Dimensional LP Formulation}
\label{sec:inflp}
Let us consider the following infinite-dimensional linear program (LP) indexed by the number of modes $N$:
\begin{equation}
\label{eq:lpN}
\begin{aligned}
\Pb_N := \inf\limits_{\mu_N, \mu_N^T} \quad  &  \langle h, \mu_N \rangle \\		
\text{s.t.} 
\quad &  \delta_N^T \otimes \mu_N^T - \delta^0_N = \Lop_{N}^\star (\mu_N)  \,,\\
\quad & \mu_N \in  \mathcal{M}_+([0, T]  \times \Z_1 \times \dots \Z_N \times \U)\,, \\
\quad & \mu_N^T \in  \mathcal{M}_+(\Z_1^T \times \dots \Z_N^T)\,.
\end{aligned}
\end{equation}
The following theorem is a consequence of~\cite[Theorem~2.3]{Las08} in our context.
\begin{theorem}
\label{th:cvgLP}
Assume that the set $\mathcal{U}^T$ of admissible controls is non empty.
Then,
\begin{itemize}
\item[(i)] For all positive $N \in \N$, LP~\eqref{eq:lpN} is solvable, that is, the inf is attained and $\Pb_N \leq J_N (0, T, \z^0) \leq J (0, T, \z^0)$.
\item[(ii)] For all positive $N \in \N$, there is no duality gap between LP~\eqref{eq:lpN} and its dual. 
\item[(iii)] If for every $(t,\z) \in [0, T] \times \Z$, the set $\{ A \, \z + B \, u : u \in \U \}$ is convex and the function 
\[
\begin{aligned}
v \mapsto g_{t,\z}(v) := \inf\limits_{u \in \U} \quad  &  h (t,u) \\
\text{s.t.}
\quad &  v = A \, \z + B \, u \,,\\
\end{aligned}
\]
is convex, then for all positive $N \in \N$, OCP~\eqref{eq:JNT} has an optimal solution and $\Pb_N = J_N (0, T, \z^0)$.
\end{itemize}
\end{theorem}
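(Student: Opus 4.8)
The plan is to recognize that the state-mode OCP~\eqref{eq:JNT} is, as noted just above the statement, an instance of the optimal control problem treated in~\cite[Theorem~2.3]{Las08}, and then to verify that our standing hypotheses meet the compactness requirements needed there, so that (i)--(iii) become the corresponding conclusions of that theorem. First I would fix the dictionary $x \leftarrow (z_1,\dots,z_N)$, $f \leftarrow (f_1,\dots,f_N)$, observing that each $f_k(t,z_k,u) = \lambda_k \, z_k + b_k(u)$ is affine in the state and continuous in $(t,u)$, that the Lagrangian $h$ is continuous and state-independent, and that the Hamiltonian degenerates to $H \equiv 0$. By Assumption~\ref{hyp:compact} the sets $\Z_k = [\underline{z_k},\overline{z_k}]$ and $\Z_k^T$ are compact, and $\U$ is compact by hypothesis, so $\Sigmab_N$ and $\Sb_N$ are compact and all data are continuous on compact sets, which is exactly the setting in which~\cite[Theorem~2.3]{Las08} applies.

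For (i), I would argue solvability by weak-$\star$ compactness: testing the Liouville constraint~\eqref{eq:liouvilleN} against $g \equiv 1$ and against $g(t,\cdot) = t$ fixes the total masses $\|\mu_N^T\| = 1$ and $\|\mu_N\| = T$, so the feasible set is bounded; it is also weak-$\star$ closed, hence weak-$\star$ compact by Banach--Alaoglu, and since $\mu \mapsto \langle h,\mu\rangle$ is weak-$\star$ continuous the infimum is attained. For the inequality $\Pb_N \leq J_N(0,T,\z^0)$, I would take any admissible control $\u \in \mathcal{U}_N^T$, form the occupation measures~\eqref{eq:occupationN}, and check that they are feasible for~\eqref{eq:lpN}: the Liouville equation~\eqref{eq:liouvilleN} is the integrated form of the fundamental theorem of calculus applied to $t \mapsto g(t,z_1(t),\dots,z_N(t))$ along the trajectory, while $\langle h,\mu_N\rangle = \int_0^T h(t,u(t))\,dt = J(0,T,\z^0,\u)$ by construction. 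Taking the infimum over $\u$ yields $\Pb_N \leq J_N(0,T,\z^0)$, and Lemma~\ref{th:JNTleqJT} supplies the remaining bound $J_N(0,T,\z^0) \leq J(0,T,\z^0)$.

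For (ii), the absence of a duality gap follows from the standard strong-duality theory for infinite-dimensional linear programs over cones of measures (Anderson--Nash): the weak-$\star$ compactness established in (i), together with continuity of the data on the compact set $\Sb_N$, places us precisely in the hypotheses under which~\eqref{eq:lpN} and its dual share a common value, so no gap can occur.

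The hard part is (iii), the exactness $\Pb_N = J_N(0,T,\z^0)$, because in general the LP over occupation measures is only a relaxation and a genuine admissible trajectory need not be recoverable from an optimal measure. Here I would invoke the convexity hypotheses: convexity of the velocity set $\{ A \, \z + B \, u : u \in \U \}$ together with convexity of the value function $g_{t,\z}$ rules out the classical relaxation gap, so that an optimal occupation-measure solution can be disintegrated and realized, via a measurable-selection (Filippov-type) argument, as an admissible control-trajectory pair attaining the same cost. Combined with (i) this forces $\Pb_N = J_N(0,T,\z^0)$ and, along the way, produces an optimal solution of OCP~\eqref{eq:JNT}. This last step carries all the analytic weight; parts (i) and (ii) are routine once the compactness furnished by Assumption~\ref{hyp:compact} is in place.
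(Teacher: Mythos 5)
Your treatment of (i) and (ii) is sound and, if anything, more self-contained than the paper's: the paper simply invokes \cite[Theorem~2.3~(i)--(ii)]{Las08} after checking that Assumption~\ref{hyp:compact} and compactness of $\U$ put the $N$-mode problem in that theorem's scope, whereas you re-derive attainment via the mass identities $\|\mu_N^T\|=1$, $\|\mu_N\|=T$ (obtained by testing~\eqref{eq:liouvilleN} against $g\equiv 1$ and $g=t$), Banach--Alaoglu, and feasibility of the occupation measures of any $\u\in\mathcal{U}_N^T$. That is a legitimate unpacking of the cited result and the right inequality via Lemma~\ref{th:JNTleqJT} matches the paper.

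There is, however, a genuine gap in your part (iii), and it is precisely the one step that constitutes the paper's actual proof of that part. The LP~\eqref{eq:lpN} concerns the \emph{truncated} system with dynamics $f=(f_1,\dots,f_N)$ on $\Sigmab_N\subset\R^{1+N}$, so the no-relaxation-gap result you want to invoke (\cite[Theorem~2.3~(iii)]{Las08}, or your Filippov-type realization of an optimal measure) requires convexity of the \emph{finite-dimensional} velocity set $f(t,z_1,\dots,z_N,\U)\subset\R^N$ and of the associated finite-dimensional value function. The theorem's hypotheses are instead stated for the infinite-dimensional objects $\{A\,\z+B\,u: u\in\U\}\subset\Z$ and $g_{t,\z}$, and you apply them to the $N$-mode problem as if they were the same thing. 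They are not: one must fix $(z_1,\dots,z_N)$, lift it to an element $\z\in\Z$ by completing with zeros in the Riesz basis, observe that $f_k(t,z_k,u)=\langle A\,\z+B\,u,\Psi_k\rangle$ so that $f(t,z_1,\dots,z_N,\U)$ is the image of the convex set $\{A\,\z+B\,u: u\in\U\}$ under a linear map (hence convex), and likewise that the finite-dimensional value function is an infimal projection of $g_{t,\z}$ under that linear map (hence convex). Without this translation the convexity hypotheses cannot be brought to bear on LP~\eqref{eq:lpN} at all. The remaining analytic content of (iii) --- disintegration of an optimal measure and measurable selection of a control --- is exactly what \cite[Theorem~2.3~(iii)]{Las08} supplies, so asserting it is acceptable here, but the bridge between the infinite-dimensional hypotheses and the finite-dimensional conclusion must be made explicit.
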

\begin{proof}
The set $\mathcal{U}^T$ is non empty. Thus, for all positive $N \in \N$, the set $\mathcal{U}_N^T$ is also non empty.
As shown in~\cite[\S~2.3]{Las08}, LP~\eqref{eq:lpN} is feasible whenever there exists an admissible control for OCP~\eqref{eq:JNT}. The right inequality in (i) follows from Lemma~\ref{th:JNTleqJT}.
Using Assumption~\ref{hyp:compact}, the proofs of the left inequality in (i) and (ii) follow respectively from~\cite[Theorem~2.3~(i)]{Las08} and~\cite[Theorem~2.3~(ii)]{Las08}.

Let us note $f := (f_1,\dots, f_N)$.
To prove (iii), we need to show that the two assumptions of~\cite[Theorem~2.3~(iii)]{Las08} are ensured, namely that:
\begin{enumerate}
\item For every $(t, z_1,\dots,z_N) \in \Sigmab_N$, the set $f(t,z_1,\dots,z_N,\U)$ is convex.
\item For every $(t, z_1,\dots,z_N) \in \Sigmab_N$, the function 
\[
\begin{aligned}
v \mapsto \inf\limits_{u \in \U} \quad  &  h (t,u) \\		
\text{s.t.} 
\quad &  v = f(t,z_1,\dots,z_N,u) \,,\\
\end{aligned}
\]
is convex.
\end{enumerate}
Let us fix $(z_1,\dots,z_N) \in \Sigmab_N$ and complete this vector with infinitely many zeros to obtain an element $(t,\z) \in [0, T] \times \Z$. Then the first (resp.~second) item follows from the convexity of $\{ A \, \z + B \, u : u \in \U \}$ (resp.~$g_{t,\z}$), yielding the desired result.
\end{proof}
\section{A Hierarchy of SDP Relaxations}
\label{sec:sdp}
Here, we apply the results from~\cite[\S~3.3]{Las08} to derive SDP relaxations of the infinite-dimensional LP~\eqref{eq:lpN}, yielding a converging sequence of lower bounds for $\Pb_N$. This also allows to obtain lower bounds for the cost $J_N (0, T, \z^0)$ of OCP~\eqref{eq:JNT} as well as the cost $J(0, T, \z^0)$ of the initial problem, i.e.~OCP~\eqref{eq:JT}. Section~\ref{sec:mom} is dedicated to preliminary background on moment matrices. Then, we provide the hierarchy of finite-dimensional SDP relaxations of $\Pb_N$ in Section~\ref{sec:sdpocp}.
\subsection{Moment Matrices}
\label{sec:mom}
%
%
We assume that $\U$ is a  basic compact semialgebraic subset of $\R^m$. Using Assumption~\ref{hyp:compact}, this gives the following definitions for all positive integer $N$:
\begin{align} 
\label{eq:defU}
\U & :=  \{u \in \R^m : w_1(u) \geq 0, \dots, w_s(u) \geq 0 \} \,,\\
\label{eq:defZ}
\Z_1 \times \dots \times \Z_N & :=  \{(z_1,\dots,z_N) \in \R^N : v_1(z_1) \geq 0,\dots, v_N(z_N) \geq 0 \} \,, \\
\label{eq:defZT}
\Z_1^T \times \dots \times \Z_N^T & :=  \{(z_1,\dots,z_N) \in \R^N : v_1^T(z_1) \geq 0,\dots, v_N^T(z_N) \geq 0 \} \,,
\end{align}
for polynomials $w_1,\dots,w_s \in \R[u]$, $v_N(z) := (\overline{z_N} - z) (z - \underline{z_N})$ and $v_N^T(z) := (\overline{z_N}^T - z) (z^T - \underline{z_N}^T)$, where $\underline{z_N}$ and $\overline{z_N}$ are as in Assumption~\ref{hyp:compact}.

We set $r_j := \lceil (\deg g_j ) / 2 \rceil, j = 1, \dots, s$. For the ease of further notation, we set $g_0(\x) := 1$. For sequel purpose (Section~\ref{sec:sdpocp}), we also define $r_{\min} := \max \{1, r_1, \dots,r_m \}$.

\if{
\paragraph{Sums of squares} Let $\Sigma[u]$ stand for the cone of polynomial sums of squares (SOS) and let us note $\Sigma_r[u]$ the cone of SOS polynomials of degree at most $2 r$, namely $\Sigma_r[u] := \Sigma[u] \cap \R_{2r}[u]$.

For each $r \geq r_{\min} := \max \{1, r_1, \dots,r_m \}$, let $\Q_r$ be the $r$-truncated quadratic module generated by $g_0, \dots, g_m$:
\begin{align*}
\Q_r & := \Bigl\{\,\sum_{j=0}^{m} \sigma_j(u) {g_j} (u) : \sigma_j \in \Sigma_{r - r_j}[u], \,  j = 0, \dots, s  \,\Bigr\} \,.
\end{align*}
}\fi
To guarantee the convergence behavior of the relaxations presented in the sequel, we need to ensure that polynomials which are positive on $\U$ lie admit certain representations. The existence of such representations is guaranteed by Putinar's Positivstellensaz (see e.g.~\cite[Section 2.5]{lasserre2009moments}), when the following condition holds:
\begin{assumption}(Archimedean property)
\label{hyp:archimedean}
There exists a large enough integer $M$ such that one of the polynomials $w_j$ describing the set $\U$ is equal to $M - \| u \|_2^2$.
\end{assumption}
\if{
In addition, the semialgebraic set $\X$ is assumed to be ``simple'' (e.g.~a ball or a box), meaning that $\X$ fulfills the following condition: 
\begin{assumption}
\label{hyp:momb}
The moments of the Lebesgue measure on $\X$ are available analytically.
\end{assumption}
}\fi
From now on we assume that the control set $\U$ is a basic compact semialgebraic set as in~\eqref{eq:defU} and that it satisfies the Archimdean property stated in Assumption~\eqref{hyp:archimedean}. Note that the definition of each polynomial $v_N$(resp.~$v_N^T$) involved in~\eqref{eq:defZ} (resp.~\eqref{eq:defZT}) ensures that this Archimedean property also holds for all state sets.

\paragraph{Moment matrices}
Given a positive $N\in\N$, a multi-index $\alpha$ is a vector of $N$ nonnegative integers $\alpha := (\alpha_1, \dots, \alpha_N)$.
For all $r \in \N$, we set $\N^{N}_r := \{ \alpha \in \N^{N} : \sum_{i=1}^{N} \alpha_i \leq r \}$, whose cardinality is $\binom{N+r}{r}$. We define $\N^{m}_r$ in a similar way.

Then a polynomial $\theta \in \R[t,z_1,\dots,z_N,u]$ is written as follows:
\[(t,z_1,\dots,z_N,u) \mapsto \theta(t,z_1,\dots,z_N,u) \,=\,\sum_{p \in \N,\alpha \in \N^N, \beta \in\N^m} \, \theta_{p \alpha \beta} \,  t^p z_1^{\alpha_1} \dots z_N^{\alpha_N}  u^\beta \:, \]
and $\theta$ is identified with its vector of coefficients $\theta=(\theta_{\gamma})$ in the canonical basis of monomials indexed by $\gamma \in \N \times \N^N \times \N^m = \N^{1+N+m}$.

Given a real sequence $\y =(y_{\gamma})_{\gamma \in \N^{1+N+m}}$, let us define the linear functional $\ell_\y : \R[t,z_1,\dots,z_N,u] \to \R$ by $\ell_\y(\theta) := \sum_{\gamma} \theta_{\gamma} y_{\gamma}$, for every polynomial $\theta$. 
Then, we associate to $\y$ the so-called {\it moment} matrix $\M_r(\y)$, that is the real symmetric matrix  with rows and columns indexed by $\N_r^{1+N+m}$ and the following entrywise definition: 
\[ 
(\M_r(\y))_{\beta,\gamma} := y_{\beta+\gamma}   \,, \quad
\forall \beta, \gamma \in \N_r^{1+N+m} \,. 
\]

Now, we recall the following important fact allowing to build the hierarchy of semidefinite relaxations in Section~\ref{sec:sdpocp}.
If $\y$ is the sequence of moments of a nonnegative measure $\mu$ supported on $\R \times \R^N \times \R^m$, then one has for every polynomial $q \in \R[t,z_1,\dots,z_N,u]$, with vector of coefficients denoted by $\q$:
\[
\q^T \, \M_r(\y)) \, \q = \ell_\y(q^2) = \int_{\R \times \R^N \times \R^m} q^2 \, d \mu \geq 0 \,.
\]
Thus, a necessary condition for $\mu$ to be  nonnegative with support in $\R \times \R^N \times \R^m$ is that $\M_r(\y)$ is a semidefinite positive matrix, for all $r$ (which is denoted by $\M_r(\y) \succeq 0$ ).

Given a polynomial $\theta \in \R[t,z_1,\dots,z_N,u]$, we also associate to $\y$ 
the so-called {\it localizing} matrix, that is the real symmetric matrix $\M_r(\theta \, \y)$ with rows and columns indexed by $\N_r^{1+N+m}$ and the following entrywise definition: 
\[ 
(\M_r(\theta \, \y))_{\beta, \gamma} := \sum_{\delta} \theta_\delta y_{\delta+\beta+\gamma} 
\,, \quad
\forall \beta, \gamma \in \N_r^{1+N+m} \,. 
\]
Let $\y$ be the moment sequence of a nonnegative measure $\mu$ supported in the super level set of $\theta$.
As for the moment matrix, a necessary condition  is that $\M_r(\theta \, \y) \succeq 0$, for all $r$.
\subsection{A Hierarchy of Finite-Dimensional SDP Relaxations}
\label{sec:sdpocp}
We note $g_T(z_1,\dots,z_N) := g(z_1,\dots,z_N)$ for any $g \in \mathcal{C}^1(\Sigmab_N)$. For each positive $N\in\N$, given a sequence $\y = (y_\gamma)$ indexed in the monomial basis of $\R[t,z_1,\dots,z_N]$, let us note $\y(t)$, $\y(z_1),\dots,\y(z_N)$ and $\y(u)$ its marginals with respect to the variables $t$, $z_1,\dots,z_N$ and $u$ respectively. These marginal sequences are indexed in the monomial basis of $\R[t]$, $\R[z_1],\dots,\R[z_N]$ and $\R[u]$ respectively so that the following holds:
\[
\y(t) = (z_{p,0,0})_{p \in \N} \,, \quad 
\y(z_i) = (z_{0,k e_i,0})_{k \in \N} \,, \quad 
\y(u) = (z_{0,0,\beta})_{\beta \in \N^m} \,, 
\]
where $e_i$ is the $N$-th dimensional vector whose only nonzero entry is 1 at index $i$.

For all $r \geq r_{\min}$, let us consider the following SDP relaxations of LP~\eqref{eq:lpN}:
\begin{equation}
\label{eq:sdprelax}
\begin{aligned}
\Pb_{N,r} := \inf\limits_{\y, \y^T} \quad & \ell_\y (h) \\	
\text{s.t.} 
\quad & \M_{r} (\y), \M_{r} (\y^T) \succeq 0 \,, \quad \M_{r-1} (t (T - t) \y(t))\succeq 0 \,, \\
\quad & \M_{r - 1}(v_i \, \y(z_i)) \succeq 0 \,, 
\quad i = 1,\dots, N \,,\\
\quad & \M_{r - r_j}(g_j \, \y(u)) \succeq 0 \,, 
\quad j = 1,\dots, s \,,\\
\quad & \M_{r - 1}(v_i^T \, \y^T) \succeq 0 \,, 
\quad i = 1,\dots, N \,,\\
\quad & \ell_{\y^T}(g_T) -  \ell_{\y} (\Lop_N(v)) = v(0,z_1^0,\dots,z_N^0) \,, \quad \forall g = (t^p z_1^{\alpha_1} \dots z_N^{\alpha_N})  \\
\quad & \text{with } p + |\alpha| \leq r \,.
\end{aligned}
\end{equation}
Let us briefly recall why SDP~\eqref{eq:sdprelax} is a relaxation of LP~\eqref{eq:lpN}. The reason is that for any couple of Borel measures $(\mu_N,\mu_N^T)$  feasible for LP~\eqref{eq:lpN}, then the corresponding couple of associated moment sequences $(\y,\y^T)$ is feasible for SDP~\eqref{eq:sdprelax} 
(we removed the subindex $N$ in the notation of moment sequences for conciseness).

The following theorem states that this hierarchy of SDP relaxations provides a sequence of lower bounds converging to the optimal value of LP~\eqref{eq:lpN}. This theorem can be seen as a  reformulation of~\cite[Theorem~3.6]{Las08} in our context.
\begin{theorem}
\label{th:sdpcvg}
Let $\U$, $\Z_1 \times \dots \times \Z_N$ and $\Z_1^T \times \dots \times \Z_N^T$ be basic compact semialgebraic sets defined respectively in~\eqref{eq:defU},~\eqref{eq:defZ} and~\eqref{eq:defZT} and suppose that Assumption~\ref{hyp:archimedean} holds. Then, for all positive $N \in \N$,
\begin{itemize}
\item[(i)] $\Pb_{N,r} \uparrow \Pb_{N}$ as $r \to \infty$.
\item[(ii)] In addition, if for every $(t,\z) \in [0, T] \times \Z$, the set $\{ A \, \z + B \, u : u \in \U \}$ is convex and the function 
\[
\begin{aligned}
v \mapsto g_{t,\z}(v) := \inf\limits_{u \in \U} \quad  &  h (t,u) \\
\text{s.t.}
\quad &  v = A \, \z + B \, u \,,\\
\end{aligned}
\]
is convex, then $\Pb_{N,r} \uparrow \Pb_{N} = J_N (0, T, \z^0) \leq J(0, T, \z^0)$ as $r \to \infty$. 
\end{itemize}
\end{theorem}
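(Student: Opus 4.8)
The plan is to reduce the statement entirely to the machinery already invoked in Theorem~\ref{th:cvgLP} and to the convergence result \cite[Theorem~3.6]{Las08}, verifying that all the hypotheses of that theorem are met in our state-mode setting. First I would observe that the SDP \eqref{eq:sdprelax} is exactly the degree-$r$ moment relaxation associated with LP~\eqref{eq:lpN}, which was already noted in the excerpt: every feasible pair of Borel measures $(\mu_N,\mu_N^T)$ gives rise, via its moment sequences $(\y,\y^T)$, to a feasible point of \eqref{eq:sdprelax}, so that $\Pb_{N,r}\leq\Pb_N$ for each $r$. Moreover, relaxing a smaller localizing/moment constraint set as $r$ increases can only tighten the feasible region, hence $(\Pb_{N,r})_r$ is nondecreasing and bounded above by $\Pb_N$. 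The substance of part~(i) is therefore the reverse inequality $\lim_{r\to\infty}\Pb_{N,r}\geq\Pb_N$, i.e.\ the absence of a relaxation gap in the limit.

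For part~(i), I would check the standing hypotheses required by \cite[Theorem~3.6]{Las08}. The key point is that the support sets are basic compact semialgebraic and satisfy the Archimedean property: $\U$ is Archimedean by Assumption~\ref{hyp:archimedean}, and for each mode the bracketing polynomials $v_N(z)=(\overline{z_N}-z)(z-\underline{z_N})$ and $v_N^T$ force $z_N$ into a compact interval, so the product sets \eqref{eq:defZ} and \eqref{eq:defZT} are Archimedean as well, as already remarked in Section~\ref{sec:mom}; the time variable is confined to $[0,T]$ by the localizing constraint on $t(T-t)$. Putinar's Positivstellensatz then applies on $\Sb_N$ and on the terminal set. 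With these ingredients the argument of \cite[Theorem~3.6]{Las08} transfers verbatim with the substitution $x\leftarrow(z_1,\dots,z_N)$, $f\leftarrow(f_1,\dots,f_N)$ indicated after Theorem~\ref{th:cvgLP}: the Liouville equality \eqref{eq:liouvilleN} is encoded by the linear moment constraints on test monomials $g=t^p z_1^{\alpha_1}\cdots z_N^{\alpha_N}$, the cost $\ell_\y(h)$ is linear in the moments, and a standard weak-$\star$ compactness/extraction argument produces, in the limit, a pair of measures optimal for LP~\eqref{eq:lpN}. This yields $\Pb_{N,r}\uparrow\Pb_N$.

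For part~(ii), I would simply combine~(i) with Theorem~\ref{th:cvgLP}(iii). Under the stated convexity hypotheses on $\{A\z+B u:u\in\U\}$ and on $g_{t,\z}$, Theorem~\ref{th:cvgLP}(iii) gives $\Pb_N=J_N(0,T,\z^0)$, while Lemma~\ref{th:JNTleqJT} (through Theorem~\ref{th:cvgLP}(i)) gives $J_N(0,T,\z^0)\leq J(0,T,\z^0)$. Chaining these with the convergence from~(i) produces $\Pb_{N,r}\uparrow\Pb_N=J_N(0,T,\z^0)\leq J(0,T,\z^0)$, as claimed. The main obstacle, and the only place requiring genuine care rather than citation, is the verification that the Archimedean/compactness conditions of \cite[Theorem~3.6]{Las08} hold uniformly for the state-mode reformulation; once that is in place, the infinite-dimensional LP duality and the moment-SDP convergence are imported directly from \cite{Las08} without further modification.
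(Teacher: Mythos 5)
Your proposal follows essentially the same route as the paper: the paper's proof simply invokes \cite[Theorem~3.6]{Las08} for the convergence $\Pb_{N,r}\uparrow\Pb_N$ and obtains the remaining (in)equalities of part~(ii) from Theorem~\ref{th:cvgLP}(iii) and Lemma~\ref{th:JNTleqJT}, exactly as you do. Your additional verification that the Archimedean and compactness hypotheses transfer to the state-mode sets is a correct and welcome elaboration of what the paper leaves implicit, but it is not a different argument.
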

\begin{proof}
See the proof of~\cite[Theorem~3.6]{Las08}. 
The second inequality of (ii) comes from Theorem~\ref{th:JNTleqJT}. 
\end{proof}
For the case of OCP~\eqref{eq:JNfree} with free terminal time, $T$ becomes a variable lying in a given interval $[0, T_0]$ and we need to solve another SDP relaxation. This relaxation (omitted for the sake of conciseness) is obtained as in~\cite[\S~3.3]{Las08} and involves the moments of the occupation measure $\y^T$, which is now supported on $[0, T_0] \times \Z_1^T \times \Z_N^T$. 
\section{Numerical Experiments}
\label{sec:bench}
Here, we present experimental benchmarks that illustrate our method. 
\subsection{Software Implementation}
\label{sec:soft}
For a given positive $N \in \N$, an instance of OCP~\eqref{eq:JNT}, we build the SDP~\eqref{eq:sdprelax} at relaxation order $r$. The overall algorithmic scheme is implemented by using the {\sc POCP} package~\footnote{\url{http://homepages.laas.fr/henrion/software/pocp}} available in {\sc Matlab}.
{\sc POCP} allows to solve (nonlinear) OCPs with polynomial data. Here, we use {\sc POCP} to relax OCP~\eqref{eq:JNT} as SDP~\eqref{eq:sdprelax},  the relaxation process being handled by {GloptiPoly 3}~\cite{gloptipoly} via {\sc Yalmip}~\cite{yalmip} toolbox.
We compute the solution of SDP~\eqref{eq:sdprelax} thanks to the interface between {\sc GloptiPoly} and the SDP solver {\sc SeDuMi 1.3}~\cite{sedumi}.

All benchmarks were performed with a PC Intel Quad-Core CPU (3.30GHz) with 16Gb of RAM, running under Debian 8.
\subsection{Diffusion Equation}
\label{sec:heat}
We consider the example of the one-dimensional heat diffusion model in the Hilbert space $\Z = L^2([0, 1])$ of square integrable functions with control in $\U = [-1, 1]$. This model is described as follows by the linear PDE with boundary conditions, given by:
\begin{align}
\label{ex:heat}
\dfrac{\partial h}{\partial t} & = \dfrac{\partial^2 h}{\partial x^2} + b(x) \, u(t) \,, \quad \forall t \in [0, T] \,, \nonumber \\
\dfrac{\partial h}{\partial x}(0,t) & = \dfrac{\partial h}{\partial x}(1,t) = 0 \,, \quad \forall t \in [0, T] \,,  \\ 
h(x, 0) & = \cos (\pi x) \,, \quad  h(x, T) = 0 \,, \forall x \in [0, 1] \,, \nonumber 
\end{align}
with $b(x) := \dfrac{1}{\epsilon} \mathbf{1}_{[x_0 - \epsilon, x_0 + \epsilon]}(x)$.
This heat diffusion PDE can be described by using an abstract Cauchy system as in~\eqref{eq:cauchy}, with $A \, h := \dfrac{\partial^2 h}{\partial x^2}$ defined on the domain:
\[
\D(A) = \{ 
h \in \Z : \dfrac{\partial h}{\partial x}, \dfrac{\partial^2 h}{\partial x^2} \in \Z \text{ and } h'(0) = h'(1) = 0
\}
\]
and $B : \U \to \Z$ being the bounded linear operator defined by $B \, u = b \, u$. As shown e.g.~in~\cite{Curtain95}, the operator $A$ is a Riesz-spectral operator with eigenvalues given by $\lambda_N = -N^2 \pi^2$ and corresponding eigenvectors given by $\Phi_N(x) = \sqrt{2} \sin (N \pi x)$, for all positive integer $N$.

Now we describe our results after solving three successive SDP relaxations of the minimal time problem, i.e.~OCP~\eqref{eq:JNfree} with $h = 1$, $N = 3$, $T_0 = 1$, $\epsilon = 0.4$ and $x_0 = 0.27$, for increasing values of the relaxation order (from $r = 2$ to $r = 6$). The lower bounds of the minimal time are $\Pb_{3,2} = 0.203$, $\Pb_{3,4} = 0.302$ and $\Pb_{3,6} = 0.359$, all being computed in less than 5 seconds. After solving the SDP relaxations, we perform the controller extraction by following the procedure described in~\cite[\S~5]{Korda14}, whose output $u_{N,r}$ is a polynomial approximation of degree $r/2$.
\begin{figure}[!ht]
\centering
\subfigure[$r = 2$]{
\includegraphics[scale=\sizesmallfig]{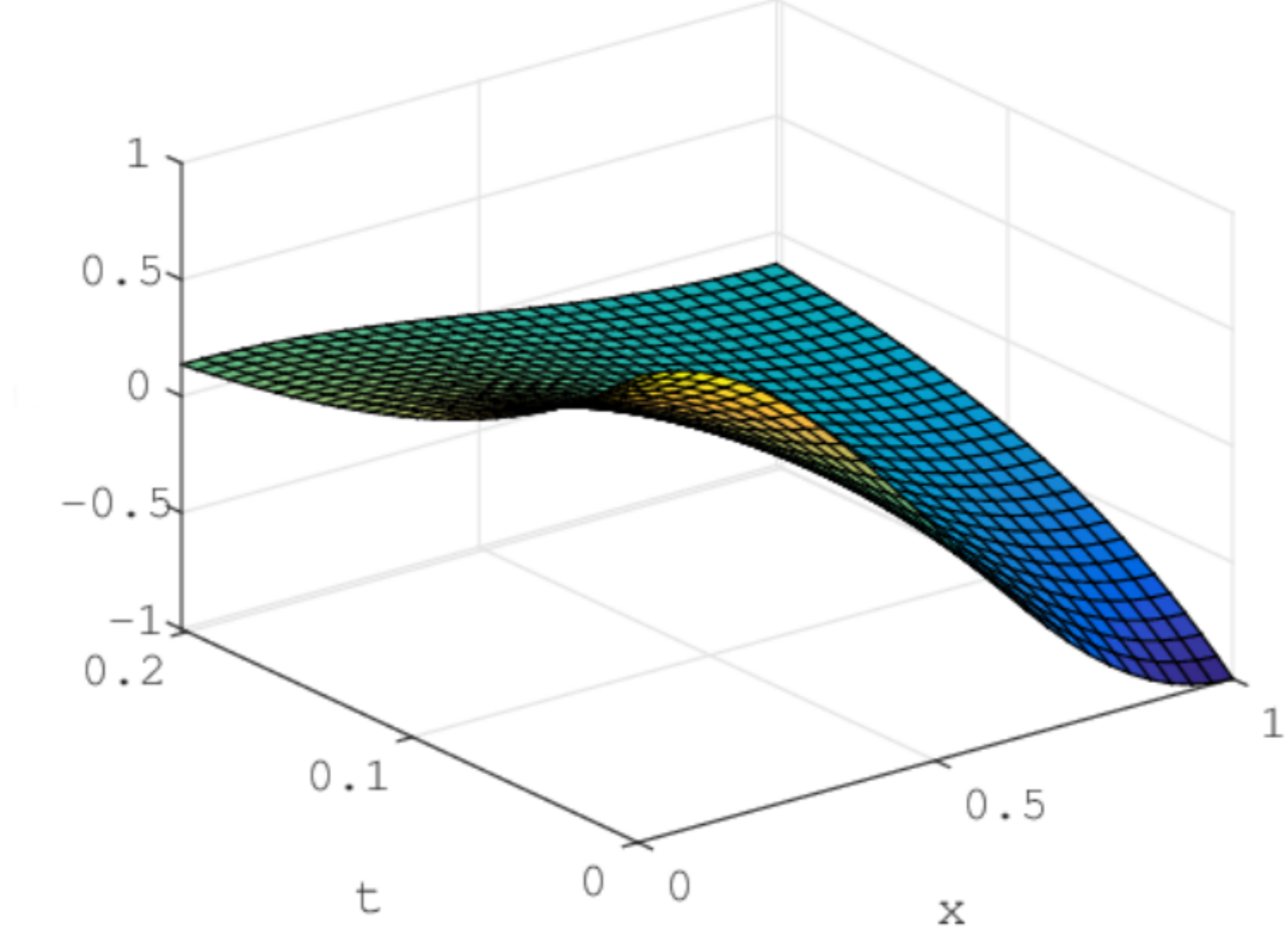}}
\subfigure[$r = 4$]{
\includegraphics[scale=\sizesmallfig]{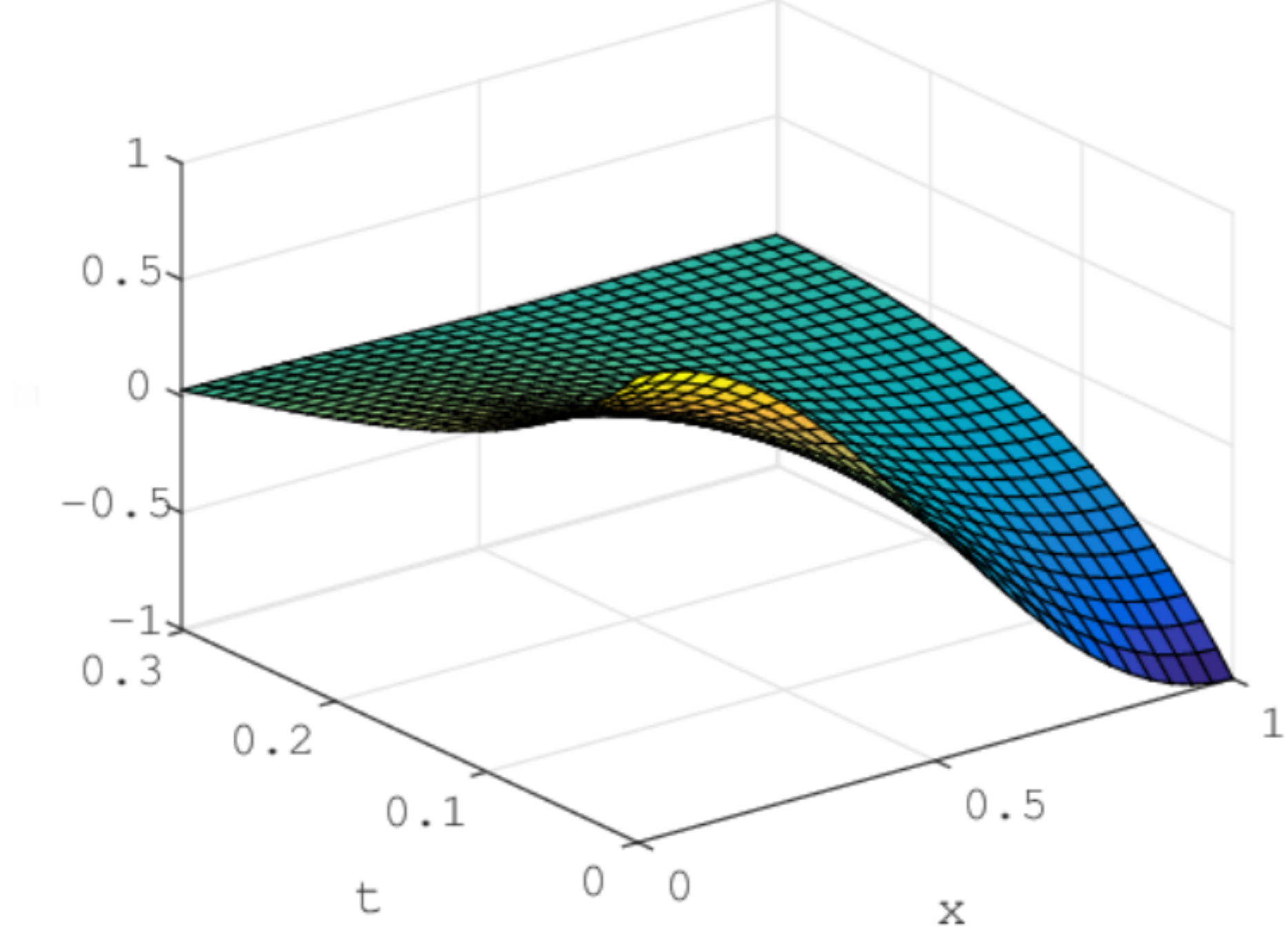}}
\\
\subfigure[$r = 6$]{
\includegraphics[scale=\sizesmallfig]{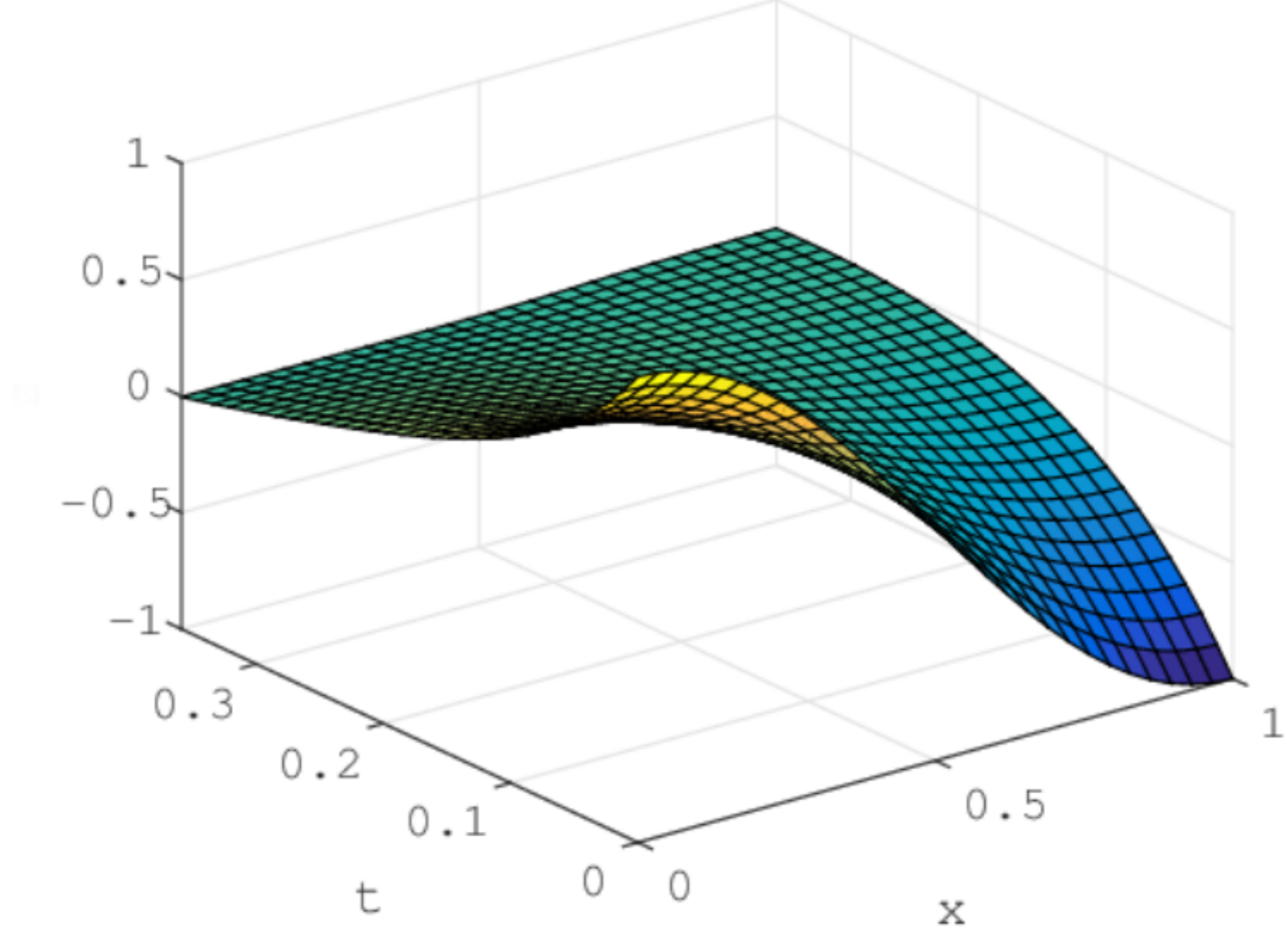}}
\caption{Approximate trajectories for minimal time OCP involving PDE~\eqref{ex:heat}}
\label{fig:heat}
\end{figure}

Then, we compute a numerical approximation of the solution $h$ of PDE~\eqref{ex:heat} with $u = u_{N,r}$, while relying on the~\texttt{pdepe} procedure available inside {\sc Matlab}. The size of the spatial mesh (resp.~number of time steps) is 30. The corresponding results are displayed on Figure~\ref{fig:heat}. The left picture (corresponding to $r = 2$) shows that the lower bound obtained by SDP is too coarse as the value of $h$ does not satisfy the final condition $h(x, T) = 0$ at time $T = \Pb_{3,2} = 0.203$. Increasing the value of the relaxation order allows to obtain controllers ensuring that the solutions to~\eqref{ex:heat} vanish at the end of the simulation.
\subsection{Wave Equation}
\label{sec:wave}
Next, we consider the one-dimensional model of wave equation with $\Z$ and $\U$ as in Section~\ref{sec:heat}, described the linear PDE with boundary conditions given by:
\begin{align}
\label{ex:wave}
\dfrac{\partial^2 w}{\partial t} & = \dfrac{\partial^2 w}{\partial x^2} + b(x) \, u(t) \,, \quad \forall t \in [0, T] \,, \nonumber \\
w(x,0) & = \sin (2 \pi x) \,, \quad \forall x \in [0, 1] \,, \\
\dfrac{\partial w}{\partial t}(x,0) & = \sin^2 (2 \pi x) \,, \quad \forall x \in [0, 1] \,, \nonumber \\
w(0,t) & = w(1,t) = 0 \,, \quad \forall t \in [0, T] \,, \nonumber
\end{align}
with $b(x)$ as in Section~\ref{sec:heat}. The wave equation can be described by using an abstract Cauchy system associated to a Riesz-spectral operator with eigenvalues $\lambda_{N} = j N \pi$ for all nonzero $N \in \Z$ and a corresponding Riesz basis of eigenfunctions $\Phi_{N} = \dfrac{1}{j N \pi} (\sin (N \pi x),  \  j N \pi \sin (N \pi x))^T$. Here $j$ denotes the unit imaginary number with $j^2 = 1$.

To solve the minimal time problem OCP~\eqref{eq:JNfree}, we first reformulate~\eqref{eq:ocpN} as a real system by considering the real and imaginary parts of $z_N$ and $\lambda_N$. Note that when fixing the number of modes $N$, the OCP has dimension $4 N$. Thus, one ends up computing the cost value $\Pb_{N,r}$ of SDP~\eqref{eq:sdprelax} which involves$\binom{4 N + 2 r}{ 2 r}$ variables at relaxation order $r$. This comes together with a computational burden, as reported in Table~\ref{table:wave} for values of $N$ and $r$ between 1 and 4. For each $(N,r)$, ``vars'' stands for the total number of SDP variables considered when solving~\eqref{eq:sdprelax} with SeDuMi. The occurrence of the symbol ``$-$'' means that the SDP problem could not be solved within one day of computation.
\begin{table}[!ht]
\begin{center}
\caption{Timing results to solve SDP~\eqref{eq:sdprelax} for PDE~\eqref{ex:wave}}
\begin{tabular}{c|c|cccc}
\hline
\multicolumn{2}{c|}{relaxation order $r$}
 & 1 & 2 & 3 & 4
\\
\hline            
\multirow{2}{*}{$N=1$} & vars &  $42$ & $279$ & $1134$ & $3498$ \\
& time (s) & $1.25 $ & $3.70 $ &  $17.5$ & $156 $ \\
\hline
\multirow{2}{*}{$N=2$} & vars &  $100$ & $1496$ & $11011$ & $-$ \\
& time (s) & $2.58 $ & $40.1$ &  $4518$ & $-$ \\
\hline
\multirow{2}{*}{$N=3$} & vars &  $211$ & $4880$ & $-$ & $-$ \\
& time (s) & $5.30$ & $496.$ &  $-$ & $-$ \\
\hline
\multirow{2}{*}{$N=4$} & vars &  $342$ & $12160$ & $-$ & $-$ \\
& time (s) & $9.16$ & $5824$ &  $-$ & $-$ \\
\hline
\end{tabular}
\label{table:wave}
\end{center}
\end{table}

As in Section~\ref{sec:heat}, we obtain polynomial approximations $u_{N,r}$ of the controller and represent them on Figure~\ref{fig:wave} for $N=1,2,3$. At the first mode ($N=1$), one notices that the approximations obtained at higher relaxation orders ($r=3$ and $r=4$) are close to each other, which seems to indicate a convergence behavior. For higher number of modes, such behavior is hard to investigate due the quickly growing size of SDP problems.
\begin{figure}[!ht]
\centering
\subfigure[$N = 1$, $r \in \{1,2,3,4\}$]{
\includegraphics[scale=\sizewave]{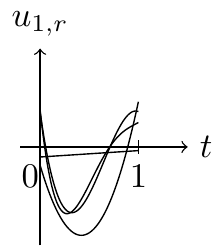}}
\subfigure[$N = 2$, $r \in \{1,2,3\}$]{
\includegraphics[scale=\sizewave]{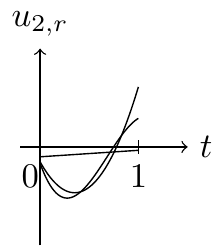}}
\subfigure[$N = 3$, $r \in \{1,2\}$]{
\includegraphics[scale=\sizewave]{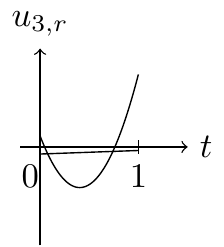}}
\caption{Approximate control $u_{N,r}$ for minimal time OCP~\eqref{eq:JNfree} related to PDE~\eqref{ex:wave}}
\label{fig:wave}
\end{figure}

\section{Conclusion}
\label{sec:end}
This paper presented a general approximation framework to handle certain classes of OCPs with infinite-dimensional linear state constraints involving Riesz-spectral operators. The first step consists of a state-mode discretization of the operator to obtain an optimization problem over occupation measures. The second step consists of solving SDP relaxations involving finite numbers of moments of the occupation measures. 
By selecting only a few modes and number of moments, preliminary experiments demonstrate that our method is practically able to control systems involving PDEs. A first research direction would be to investigate sparse/symmetric systems involving higher number of modes.
Another topic of interest would be to develop extensions to several classes of nonlinear PDEs, including the case when the dynamics (resp.~cost) are polynomial functions of the state.
\bibliographystyle{plain}

\begin{thebibliography}{10}

\bibitem{ahmadi2017safety}
M.~Ahmadi, G.~Valmorbida, and A.~Papachristodoulou.
\newblock Safety verification for distributed parameter systems using barrier
  functionals.
\newblock {\em Systems \& Control Letters}, 108:33--39, 2017.

\bibitem{Curtain95}
R.F. Curtain and H.~Zwart.
\newblock {\em {An Introduction to Infinite-Dimensional Linear Systems
  Theory}}.
\newblock Texts in Applied Mathematics. Springer New York, 1995.

\bibitem{HK14roa}
D.~Henrion and M.~Korda.
\newblock {Convex Computation of the Region of Attraction of Polynomial Control
  Systems}.
\newblock {\em IEEE Transactions on Automatic Control}, 59(2):297--312, 2014.

\bibitem{gloptipoly}
D.~Henrion, J.-B. Lasserre, and J.~Löfberg.
\newblock {GloptiPoly 3: moments, optimization and semidefinite programming}.
\newblock {\em Optimization Methods and Software}, 24(4-5):pp. 761--779, August
  2009.

\bibitem{KHJ13mci}
M.~Korda, D.~Henrion, and C.~N. Jones.
\newblock Convex computation of the maximum controlled invariant set for
  discrete-time polynomial control systems.
\newblock In {\em IEEE 52nd Annual Conference on Decision and Control (CDC),
  2013}, pages 7107--7112, Dec 2013.

\bibitem{Korda14}
M.~Korda, D.~Henrion, and C.~N. Jones.
\newblock Controller design and region of attraction estimation for nonlinear
  dynamical systems.
\newblock {\em IFAC Proceedings Volumes}, 47(3):2310 -- 2316, 2014.
\newblock 19th IFAC World Congress.

\bibitem{Lasserre01moments}
J.-B. Lasserre.
\newblock {Global Optimization with Polynomials and the Problem of Moments}.
\newblock {\em SIAM Journal on Optimization}, 11(3):796--817, 2001.

\bibitem{lasserre2009moments}
J.-B. Lasserre.
\newblock {\em {Moments, Positive Polynomials and Their Applications}}.
\newblock Imperial College Press optimization series. Imperial College Press,
  2009.

\bibitem{Las08}
J.-B. Lasserre, D.~Henrion, C.~Prieur, and E.~Tr{\'e}lat.
\newblock {Nonlinear Optimal Control via Occupation Measures and
  LMI-Relaxations}.
\newblock {\em SIAM J. Control Optim.}, 47(4):1643--1666, June 2008.

\bibitem{lebeau2010experimental}
G.~Lebeau and M.~Nodet.
\newblock {Experimental study of the HUM control operator for linear waves}.
\newblock {\em Experimental mathematics}, 19(1):93--120, 2010.

\bibitem{lions1988exact}
J.-L. Lions.
\newblock Exact controllability, stabilization and perturbations for
  distributed systems.
\newblock {\em SIAM review}, 30(1):1--68, 1988.

\bibitem{yalmip}
J.~Löfberg.
\newblock Yalmip : A toolbox for modeling and optimization in {MATLAB}.
\newblock In {\em Proceedings of the CACSD Conference}, Taipei, Taiwan, 2004.

\bibitem{MGHT17reach}
V.~{Magron}, P.-L. {Garoche}, D.~{Henrion}, and X.~{Thirioux}.
\newblock {Semidefinite Approximations of Reachable Sets for Discrete-time
  Polynomial Systems}.
\newblock 2017.
\newblock \url{https://arxiv.org/abs/1703.05085}.

\bibitem{meurer2012control}
T.~Meurer.
\newblock {\em {Control of Higher--Dimensional PDEs: Flatness and Backstepping
  Designs}}.
\newblock Springer Science \& Business Media, 2012.

\bibitem{MorrisHB}
K.~A. Morris.
\newblock {Control of systems governed by partial differential equations}.
\newblock {\em {The Control Theory Handbook}}, 2010.

\bibitem{sedumi}
Jos~F. Sturm.
\newblock {Using SeDuMi 1.02, a MATLAB toolbox for optimization over symmetric
  cones}, 1998.

\bibitem{Valmorbida:IEEETAC:2016}
G.~Valmorbida, M.~Ahmadi, and A.~Papachristodoulou.
\newblock Stability analysis for a class of partial differential equations via
  semidefinite programming.
\newblock {\em IEEE Transactions on Automatic Control}, 61(6):1649--1654, 2016.

\bibitem{zuazua2005propagation}
E.~Zuazua.
\newblock Propagation, observation, and control of waves approximated by finite
  difference methods.
\newblock {\em {SIAM review}}, 47(2):197--243, 2005.

\end{thebibliography}

\end{document}